\newtheorem{theorem}{Theorem}[section]
\newtheorem{lem}[theorem]{Lemma}
\newtheorem{pro}[theorem]{Proposition}
\newtheorem{rem}[theorem]{Remark}
\newtheorem{exa}[theorem]{Example}
\newtheorem{theore}{Theorem}[section]
\newtheorem{corollar}{Corollary}
\newtheorem{Definition}[theorem]{Definition}
\newtheorem*{Definition*}{Definition}
\newcommand\blfootnote[1]{%
  \begingroup
  \renewcommand\thefootnote{}\footnote{#1}%
  \addtocounter{footnote}{-1}%
  \endgroup }
\def\qed{\hfill \ifhmode\unskip\nobreak\fi\quad\ifmmode\Box\else$\Box$\fi\\ }
\begin{document}

\title[Stable subgroups and Morse subgroups in mapping class groups]{Stable subgroups and Morse subgroups in mapping class groups}
\author{Heejoung Kim}
\blfootnote{\textit {Date}: November 13, 2017.\\
\indent
2000 \textit{Mathematics Subject Classification.} 20F65, 20F67, 57M07.
\indent
\thanks{The author gratefully acknowledges the support of the NSF grant DMS-1405146.}}

\address{Department of Mathematics, University of Illinois at Urbana-Champaign, Urbana, IL 61801}
\email{hkim404@illinois.edu}

\begin{abstract}
For a finitely generated group, there are two recent generalizations of the notion of a quasiconvex subgroup of a word-hyperbolic group, namely a \textit{stable subgroup} and a \textit{Morse} or\textit{ strongly quasiconvex subgroup}. Durham and Taylor \cite{DT15} defined stability and proved stability is equivalent to convex cocompactness in mapping class groups. Another natural generalization of quasiconvexity is given by the notion of a Morse or strongly quasiconvex subgroup of a finitely generated group, studied recently by Tran \cite{T17} and Genevois \cite{G17}. In general, a subgroup is stable if and only if the subgroup is Morse and hyperbolic. In this paper, we prove that two properties of being Morse and stable coincide for a subgroup of infinite index in the mapping class group of an oriented, connected, finite type surface with negative Euler characteristic.
\end{abstract}

\maketitle

\section{Introduction}

\indent

The notion of a quasiconvex subgroup plays an important role in the theory of word-hyperbolic groups and in its various generalizations. For a group $G$ with a finite generating set $S$, a subgroup $H\leq G$ is \textit{quasiconvex} in $G$ with respect to $S$ if there is $N\geq 0$ such that every geodesic in the Cayley graph $\Gamma(G,S)$ of $G$ with respect to $S$ that connects a pair of points in $H$ is contained in the $N$-neighborhood of $H$.
In the context of word-hyperbolic groups, quasiconvexity is not dependent on a generating set of $G$, and for a subgroup being quasiconvex has several equivalent characterizations: being finitely generated and undistorted; being boundary quasiconvex-cocompact; being rational; and so on (see \cite{S91,GS91,KB02}). However, outside hyperbolic groups, quasiconvexity is not as useful since the notion depends on the choice of a generating set of the ambient group.

There have been several important recent generalizations of the notion of quasiconvexity to the context of subgroups of arbitrary finitely generated groups. In \cite{DT15}, Durham and Taylor introduced a strong notion of quasiconvexity for a subgroup of a finitely generated group, namely stability, which is preserved under quasi-isometry and implies hyperbolicity and quasiconvexity of the subgroup.

\begin{Definition}[Stability]\label{def1.1}
\textup{Let $G$ be a finitely generated group and let $H$ be a finitely generated subgroup of $G$. We say that $H$ is} stable\textup{ in $G$ if $H$ is undistorted and if for every (equivalently, some) finite generating set $S$ of $G$ and for every $k\geq1$ and $c\geq0$ there is some $L=L(S,k,c)$ such that for every pair of $(k,c)-$quasigeodesics in $G$ with the same endpoints on $H$, each qausigeodesic is contained in the $L$-neighborhood of the other.}
\end{Definition}

Stability has been characterized in many ways. For instance, in the recent studies of a generalization of the Gromov boundary, called the Morse boundary, Cordes and Durham in \cite{CD17} characterized stability in terms of ``boundary convex cocompactness''. See Section 4 in Cordes' survey paper \cite{C17} for details. We note that since a stable subgroup of an arbitrary group is hyperbolic, the geometry of the subgroup might not reflect the geometry of the whole group.

Another recent generalization of quasiconvexity is given by the notion of a Morse subset of a geodesic metric space. See \cite{CS15, C15, ACGH16, CH17, CD17, C17} for more details and additional related results.

\begin{Definition}[Morse subset]
\textup{Let $X$ be a geodesic meteoric space. A subset $Y\subseteq X$ is \textit{Morse} if every $k\geq1$ and $c\geq0$ there is some $M=M(k,c)$ such that every $(k,c)-$quasigeodesic in $X$ with endpoints on $Y$ is contained in the $M$-neighborhood of $Y$. }
\end{Definition}

This notion has been of particular interest in the case where $Y$ is a geodesic or quasigeodesic, where it has lead to the notion of a Morse boundary introduced by Cordes in \cite{C15}.
Moreover, Arzhantseva, Cashen, Gruber, and Hume in \cite{ACGH16} characterized Morse quasigeodesics in terms of superlinear divergence and sublinear contraction with characterizations of Morse sets.

 For the case where $X$ is the Cayley graph $\Gamma(G,S)$ of a finitely generated group $G$ with a finite generating set $S$, and $Y\subseteq X$ is a subgroup of $G$, the notion of a Morse subset naturally leads to the following:

\begin{Definition}[Morse or strongly quasiconvex subgroups]\label{def1.2}
\textup{Let $G$ be a finitely generated group and let $H$ be a subgroup of $G$. We say that $H$ is a \textit{Morse} or \textit{strongly quasiconvex} subgroup of $G$ if for every (equivalently, some) finite generating set $S$ of $G$, the subgroup $H\subseteq \Gamma(G,S)$ is Morse.   }
\end{Definition}

We note that if $H$ is a Morse subset in $\Gamma(G,S)$ for some finite generating set $S$ of $G$, then the same is true for every finite generating set of $G$. Moreover, $H$ is finitely generated and undistorted in $G$ in this case.

For a finitely generated group $G$, an element $g\in G$ of infinite order is called a \textit{Morse element} if the orbit of $\langle g \rangle$ in any Calyey graph is a Morse quasigeodesic. The notion of a Morse element has been studied for some time now. This case corresponds to cyclic Morse subgroups of finitely generated groups, and the cyclic Morse subgroups are indeed stable.

However, the actual notion of a Morse subgroup has not been defined until very recently. This notion was originally explicitly defined under the name ``strongly quasiconvex subgroup'' by Tran in July 2017 \cite{T17} with its characterization via lower relative divergence. Genevois independently introduced the term ``Morse subgroup'' for the notion from Definition \ref{def1.2} above in September 2017 \cite{T17}, with its characterizations in a cubulable group.

 We think that the term ``Morse subgroup'' may be preferable to ``strongly quasiconvex subgroup'' because of how the notion fits into the theory of Morse subsets and Morse boundary.

\vspace{0.3cm}
In the case of a hyperbolic group, every quasigeodesic stays in uniformly bounded distance to a geodesic by the Morse Lemma. Thus, both stability and being Morse are equivalent to quasiconvexity. Beyond hyperbolic groups, we have many examples to distinguish theses two definitions.

\begin{exa}\label{ex1}
\textup{The group $\mathbb{Z}^2$ is a non-stable Morse subgroup of $\mathbb{Z}^2$. Consider two $(3,0)$-quasigeodesics with endpoints $(0,0)$ and $(n,0)$ where $n\in \mathbb{N}$; one consists of three geodesics $[(0,0),(0,n)]$, $[(0,n),(n,n)]$, and $[(n,n),(n,0)]$, and the other consists of three geodesics $[(0,0),(0,-n)]$, $[(0,-n),(n,-n)]$, and $[(n,-n),(n,0)]$. Then if one contains the other in its $L$-neighborhood then $L>2n$. This means that there is no uniform bound $L=L(3,0)$ as described in Definition \ref{def1.1}. Therefore, the group $\mathbb{Z}^2$ is not stable but obviously Morse in $\mathbb{Z}^2$.
However, $\mathbb{Z}$ is not Morse in $\mathbb{Z}^2$. }
\end{exa}

Furthermore, non-hyperbolic peripheral subgroups of a relatively hyperbolic group and non-hyperbolic hyperbolically embedded subgroups of a finitely generated group are Morse but not hyperbolic, meaning that they are not stable (see \cite{DS05, S16}). Also, Tran gives examples of Morse subgroups which are not stable in right-angled Coxeter groups in \cite{T17}.

On the other hand, Tran proved (Theorem 1.16 in \cite{T17}) that for a non-trivial, infinite index subgroup of the right-angled Artin group $A_\Gamma$ of a simplicial, finite, connected graph $\Gamma$ which does not decompose as a nontrivial join, a Morse subgroup of $A_\Gamma$ is stable in $A_\Gamma$. This fact also follows from Theorem B.1 in Genevois' paper \cite{G17}. The results served as a motivation for the main result of this paper: the two notions coincide for a subgroup of infinite index in the mapping class group of an oriented, connected, finite type surface with negative Euler characteristic.

A surface $S$ is of finite type if $S$ is a compact surface minus a
 finite, possibly empty set of punctures, and we denote by $\chi(S)$ the Euler characteristic of $S$.

Our main theorem is the following:

\begin{theore}\label{thm1}
Let $S$ be an oriented, connected, finite type surface with $\chi(S)<0$ which is neither the 1-punctured torus nor the 4-punctured sphere.
Let \textup{Mod$(S)$} be the mapping class group of $S$, and let $G$ be a finitely generated subgroup of \textup{Mod$(S)$}. Then the following are equivalent:
\begin{enumerate}
\item $G$ is convex cocompact.
\item $G$ is Morse of infinite index in \textup{Mod$(S)$}.
 \end{enumerate}
\end{theore}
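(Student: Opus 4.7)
I would prove the theorem in its two implications separately, with the substantive direction being (2) $\Rightarrow$ (1).

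\textbf{(1) $\Rightarrow$ (2).} The Durham--Taylor theorem \cite{DT15} identifies convex cocompactness in $\mathrm{Mod}(S)$ with stability, and since stability is by definition Morse plus hyperbolic, $G$ is Morse. Convex cocompact subgroups of $\mathrm{Mod}(S)$ are purely pseudo-Anosov (Farb--Mosher), while $\mathrm{Mod}(S)$ for the surfaces under consideration contains Dehn twists and is not virtually hyperbolic, so $G$ cannot be of finite index.

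\textbf{(2) $\Rightarrow$ (1).} By the same theorem, since $G$ is already Morse it suffices to show $G$ is hyperbolic. The plan is first to prove that $G$ is \emph{purely pseudo-Anosov} (every infinite-order element is pseudo-Anosov), and then to invoke the Kent--Leininger/Hamenst\"adt characterization: a finitely generated subgroup of $\mathrm{Mod}(S)$ is convex cocompact iff its orbit in the curve complex $\mathcal{C}(S)$ is a quasi-isometric embedding. The QI orbit will then be extracted from the Morse hypothesis together with the purely pseudo-Anosov condition, using the Masur--Minsky distance formula and the characterization of Morse quasigeodesics in $\mathrm{Mod}(S)$ by uniformly bounded subsurface projections (Behrstock).

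\textbf{Showing $G$ is purely pseudo-Anosov.} Suppose toward contradiction that $g \in G$ has infinite order and is reducible. Passing to a power by Nielsen--Thurston, $g$ fixes an essential simple closed curve $c$ and commutes with the Dehn twist $T_c$, and the subgroup $\langle g, T_c\rangle$ is an undistorted quasi-flat isomorphic to $\mathbb{Z}^2$ in $\mathrm{Mod}(S)$ (Farb--Lubotzky--Minsky). I would then run the rectangle argument of Example \ref{ex1}: for each $N$, the two paths from $g^{-N}$ to $g^N$ obtained from the straight segment in $\langle g\rangle$ and from the three-sided detour via $g^{-N}T_c^N$ and $g^N T_c^N$ are uniform $(k,c_0)$-quasigeodesics in $\mathrm{Mod}(S)$ with endpoints in $G$. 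The Morse hypothesis forces both into a common $M$-neighborhood $N_M(G)$, yielding for each $N$ an element $h_N \in G$ with $d_{\mathrm{Mod}(S)}(h_N, T_c^N)\leq M$. Thus $G$ coarsely contains the Dehn-twist axis, and iterating the argument with $(h_N)$ in place of $\langle g\rangle$ shows $G$ coarsely contains the entire flat $\langle g, T_c\rangle$.

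\textbf{Main obstacle.} The delicate step will be converting this coarse containment of a flat inside $G$ into a genuine contradiction. My plan is to run the rectangle argument one level higher, with $(h_N)$ playing the role of $\langle g\rangle$ and with a pseudo-Anosov $\phi$ supported on $S\setminus c$ conjugating $T_c$ to a twist about a disjoint curve; this produces uniform quasigeodesics in $\mathrm{Mod}(S)$ with endpoints on $G$ whose midpoints have large subsurface projection to a proper subsurface away from $c$ and hence escape every fixed neighborhood of $G$ (by the distance formula), contradicting Morse. The infinite-index hypothesis is needed to rule out the degenerate case in which $G$ already coarsely contains a curve stabilizer and the preceding rectangle argument fails to escape $G$. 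Once $G$ is known to be purely pseudo-Anosov, the Morse property upgrades---via the Behrstock characterization and the distance formula---to a QI-embedded orbit in $\mathcal{C}(S)$, and Kent--Leininger then supplies convex cocompactness.
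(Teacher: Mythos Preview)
Your direction (1)$\Rightarrow$(2) matches the paper. For (2)$\Rightarrow$(1) your strategy diverges from the paper's and contains a genuine gap at exactly the point you flag as the ``main obstacle.''

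\textbf{The gap.} Your proposed escape mechanism does not work: a pseudo-Anosov $\phi$ supported on $S\setminus c$ fixes $c$, so $\phi T_c\phi^{-1}=T_{\phi(c)}=T_c$; conjugation by such $\phi$ does not produce a new twist direction. More generally, the rectangle argument (which does correctly show that $T_c^N$ lies within $M$ of $G$ for all $N$) only tells you that $G$ coarsely swallows pieces of the flat; you have not produced a quasigeodesic with endpoints in $G$ that provably leaves every neighborhood of $G$, and you have not yet used the infinite-index hypothesis in any essential way. A secondary issue: the claim that $\langle g,T_c\rangle\cong\mathbb{Z}^2$ is undistorted via Farb--Lubotzky--Minsky is not immediate, since $g$ is an arbitrary reducible element fixing $c$, not a multitwist; indeed $g$ could already be a power of $T_c$.

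\textbf{How the paper closes this.} The paper bypasses explicit quasigeodesics and argues algebraically using two facts about Morse subgroups due to Tran: they have finite height, and their intersection with any undistorted subgroup is Morse there. Since $D_\alpha$ is central in its centralizer $C(D_\alpha)$ (which is undistorted in $\mathrm{Mod}(S)$), any infinite Morse subgroup of $C(D_\alpha)$ must have finite index. One then uses connectivity of $\mathcal{C}(S)$ to chain centralizers along a path of disjoint curves and propagate the finite-index conclusion to $g^{-1}Gg\cap C(D_\alpha)$ for \emph{every} $g\in\mathrm{Mod}(S)$. Now infinite index in $\mathrm{Mod}(S)$ supplies arbitrarily many distinct cosets $g_iG$, and since each $g_i^{-1}Gg_i$ meets $C(D_\alpha)$ in a finite-index subgroup, the intersection $\bigcap_i g_i^{-1}Gg_i$ is infinite, contradicting finite height. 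This is where infinite index is actually consumed. Finally, once $G$ is purely pseudo-Anosov, the paper simply invokes \cite{BBKL16} (undistorted and purely pseudo-Anosov implies convex cocompact); your plan to rederive this via Behrstock's inequality and the distance formula is essentially a reproof of that theorem and is unnecessary.
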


 For the implication $``(1)\Rightarrow (2)$'', we use the characterization of convex cocompactness in terms of stability proved in \cite{DT15}. For the reverse implication $``(2)\Rightarrow (1)$'', we use the characterization of convex cocompactness in terms of pseudo-Anosov elements proved in \cite{BBKL16}.
 Explicitly, we have the following corollary by combining their results and our main theorem. Furthermore, the next corollary says that stability and being Morse are equivalent notions in Mod$(S)$.

\begin{corollar}\label{co1}
Let $S$ be an oriented, connected, finite type surface with $\chi(S)<0$ which is neither the 1-punctured torus nor the 4-punctured sphere.
Let \textup{Mod$(S)$} be the mapping class group of $S$, and let $G$ be a finitely generated subgroup of \textup{Mod$(S)$}. Then the following are equivalent:
\begin{enumerate}
\item $G$ is convex cocompact.
\item $G$ is finitely generated, undistorted, and purely pseudo-Anosov.
\item $G$ is stable.
\item $G$ is Morse of infinite index in \textup{Mod$(S)$}.
 \end{enumerate}
\end{corollar}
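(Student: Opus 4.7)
The plan is to derive Corollary~\ref{co1} by chaining Theorem~\ref{thm1} with two established characterizations of convex cocompactness already in the literature. Specifically, Durham and Taylor \cite{DT15} proved that for a finitely generated subgroup $G$ of \textup{Mod}$(S)$, convex cocompactness is equivalent to stability, which immediately gives $(1)\Leftrightarrow(3)$. The characterization of convex cocompactness by being finitely generated, undistorted, and purely pseudo-Anosov, established in \cite{BBKL16}, gives $(1)\Leftrightarrow(2)$. Theorem~\ref{thm1}, the main result of the present paper, provides $(1)\Leftrightarrow(4)$. Pivoting all three equivalences through condition $(1)$ yields the asserted four-way equivalence.

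The only bookkeeping to perform is to confirm that the hypotheses align across the three sources. The surface hypothesis is identical in all three ingredients: finite type, connected, oriented, with $\chi(S)<0$, and excluding the 1-punctured torus and 4-punctured sphere, exactly as these exceptional cases appear in \cite{DT15} and \cite{BBKL16}. The ``infinite index'' hypothesis appearing in (4) is genuinely needed, since \textup{Mod}$(S)$ itself is trivially Morse in itself but is not convex cocompact; it does not need to be stated in (2) or (3), because for the surfaces under consideration \textup{Mod}$(S)$ is neither purely pseudo-Anosov nor stable in itself (it fails to be hyperbolic), so no finite-index subgroup can satisfy (2) or (3). Hence the cycle closes without the infinite-index hypothesis ever introducing an inconsistency.

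The main obstacle for this corollary is therefore not in the corollary itself, which is essentially a citation chase, but in Theorem~\ref{thm1}; the substantive content lies in the equivalence $(1)\Leftrightarrow(4)$, which in turn is split as described after Theorem~\ref{thm1} into the easy direction via \cite{DT15} and the hard direction via \cite{BBKL16}. Once the theorem is in hand, I would therefore present the proof of the corollary as a brief three-line argument: invoke \cite{DT15} for $(1)\Leftrightarrow(3)$, invoke \cite{BBKL16} for $(1)\Leftrightarrow(2)$, invoke Theorem~\ref{thm1} for $(1)\Leftrightarrow(4)$, and append a one-sentence remark pointing out why ``infinite index'' is only required in the statement of condition (4).
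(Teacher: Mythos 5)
Your proposal is correct and matches the paper's own treatment: the corollary is obtained exactly by pivoting through condition (1), citing \cite{BBKL16} for $(1)\Leftrightarrow(2)$, \cite{DT15} for $(1)\Leftrightarrow(3)$, and Theorem~\ref{thm1} for $(1)\Leftrightarrow(4)$. Your added remark on why ``infinite index'' appears only in (4) is not needed for the equivalence itself but is consistent with the paper's surrounding discussion.
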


The equivalence between (1) and (2) was proved in \cite{BBKL16} including the 1-punctured torus $S_{1,1}$ and the 4-punctured sphere $S_{0,4}$. On the other hand, the equivalence between (1) and (3) shown in \cite{DT15} excludes those two surfaces.

\begin{rem}
\textup{
The mapping class groups Mod$(S_{1,1})$ and Mod$(S_{0,4})$ are commensurable with $GL_2(\mathbb{Z})$ and thus are virtually free (see Chapter 2 in \cite{FM02}). Both these groups are locally quasiconvex and all of their finitely generated subgroups are stable and Morse. However, Mod$(S_{1,1})$ and Mod$(S_{0,4})$ contain Dehn twists and they have finitely generated subgroups that are not convex cocompact. Therefore, the conclusion of Theorem \ref{thm1} does not hold for these groups.
\\
\indent
The main result of \cite{DT15} and Corollary 1.2 in \cite{BBKL16} are stated slightly incorrectly since they should have omitted Mod$(S_{1,1})$ and Mod$(S_{0,4})$ for similar reasons.}
\end{rem}

Nevertheless, the equivalence between (3) and (4) includes the two surfaces $S_{1,1}$ and $S_{0,4}$ since stability is equivalent to being Morse in a hyperbolic group.

\vspace{0.3cm}

The paper is organized in the following way. In section \ref{s2}, we introduce the background and notions needed. In section \ref{s3}, we discuss some properties of Morse subgroups. Once this is completed, we prove two lemmas and Theorem \ref{thm1} in Section \ref{s4}.
\\
\\
\textbf{Acknowledgement.} The author would like to thank her PhD advisor Ilya Kapovich for his guidance, encouragement, and support. The author would also like to thank Chris
Leininger for helpful conversations about mapping class groups.

\section{Background}\label{s2}
In this section, we discuss some definitions and theorems for Theorem \ref{thm1}.
\subsection{Quasiconvexity}

\begin{Definition}
\textup{Let $(X, d_X)$ and $(Y,d_Y)$ be two metric spaces and let $f$ be a map from $X$ to $Y$. Then $f$ is a} $(K,C)$-quasi-isometric embedding \textup{if for every $x,y\in X$ we have }
\begin{center}
$\frac{d_X(x,y)}{K}-C\leq d_Y(f(x), f(y))\leq Kd_X(x,y)+C$.
\end{center}
\textup{The \textit{(K,C)}-quasi-isometric embedding $f$ is a} (K,C)-quasi-isometry \textup{if for every $y\in Y$ there is some $x\in X$ with $d_Y(f(x),y)\leq C$.}
\end{Definition}

For a map $i: X\to Y$ between metric spaces, we say that $X$ is \textit{undistorted}
in $Y$ if $i$ is a quasi-isometric embedding.

\begin{Definition}\label{def1}
\textup{Let $A$ and $X$ be two geodesic metric spaces and let $f: A\to X$ be a quasi-isometric embedding. We say $A$ is \textit{stable} in $X$ if for every $k\geq1$ and $c\geq0$ there is some $L=L(k,c)$ such that for every pair of $(k,c)-$quasigeodesic in $X$ with the same endpoints in $f(A)$, each quasigeodesic is contained in the $L$-neighborhood of the other. We say $A$ is \textit{Morse} or \textit{strongly quasiconvex} in $X$ if for every $k\geq1$ and $c\geq0$ there is some $M=M(k,c)$ such that every $(k,c)-$quasigeodesic in $X$ with endpoints on $f(A)$ is contained in the $M$-neighborhood of $f(A)$. }
\end{Definition}

We use those concepts for subgroups of a finitely generated group.
Let $G$ be a finitely generated group with a finite generating set $S$. Then we have a natural metric space with the associated word metric $d_S$, namely the Cayley graph $\Gamma(G,S)$. This has one vertex associated with each group element, and edges $(g,h)$ for $g,h\in G$ are assigned whenever $gh^{-1} \in S$. We note that the Cayley graph depends on the choice of a generating set.

We recall that for a finitely generated subgroup $H$ of $G$ and a finite generating set $T$ of $H$, the subgroup $H$ is called \textit{undistorted} if the inclusion map $(H,d_T)\to (G,d_S)$ is a quasi-isometric embedding.

Now we rewrite Definition \ref{def1.1} and \ref{def1.2} as follows.

\begin{Definition}
\textup{Let $G$ be a finitely generated group with a finite generating set $S$. Let $H$ be a finite generated subgroup of $G$. We say $H$ is \textit{stable} in $G$ if $H$ is undistorted in $G$ and $H\subseteq \Gamma(G,S)$ is stable for some (any) choice of a finite generating set of $H$.}
\end{Definition}

The notion of stability is independent of the choice of finite generating sets (see Section 3 in \cite{DT15}).

\begin{Definition}
\textup{Let $G$ be a finitely generated group with a finite generating set $S$, and let $H$ be a subgroup of $G$. We say that $H$ is \textit{Morse} or \textit{strongly quasiconvex} if for every $k\geq1$ and $c\geq0$ there is some $M=M(k,c)$ such that every $(k,c)-$quasigeodesic in $\Gamma(G,S)$ with endpoints on $H$ is contained in the $M$-neighborhood of $H$.}
\end{Definition}

 In fact, if $H$ is Morse in $\Gamma(G,S)$ for some finite generating set $S$ then $H$ is finitely generated and undistorted in $G$. Also, Morse subgroups are independent of the choice of finite generating sets (see Section 4 in \cite{T17}).

\subsection{Mapping class groups}

\indent
\vspace{0.2cm}

For the material in this section, we use \cite{FM02, KL08, FM10, DT15, BBKL16} as background references.

Let $S$ be an oriented, connected, finite type surface with negative Euler characteristic $\chi(S)$.
We note that a surface $S$ is of finite type if and only if the fundamental group of $S$ is finitely generated.
The \textit{(extended) mapping class group} Mod$(S)$ of $S$ is the group of isotopy classes of homeomorphisms of $S$. From the Nielsen-Thurston classification (see Section 13 in \cite{FM10}), for an element $f$ of Mod$(S)$, we have three possible cases; we say $f$ is \textit{periodic} if some power of $f$ is the identity; $f$ is \textit{reducible} if it permutes some finite collection of pairwise disjoint simple closed curves in $S$; and $f$ is \textit{Pseudo-Anosov} if it is neither periodic nor reducible. Also, if $f$ is reducible then some power of $f$ preserves a simple closed curve on $S$ up to isotopy (see \cite{FM10}).

In \cite{FM02}, Farb and Mosher introduced and developed the notion of a convex cocompact subgroup of the mapping class group of a closed, connected, and oriented surface by its action on Teichm$\ddot{\textup{u}}$ller space $\mathcal{T}(S)$. In \cite{KL08}, Kent and Leininger extended the definition for the case of an oriented, connected, finite area hyperbolic surface, which is equivalent to an oriented, connected, finite type surface with $\chi(S)<0$.

\begin{Definition}
\textup{Let $S$ be an oriented, connected, finite type surface with $\chi(S)<0$, and let Mod$(S)$ be its mapping class group. Then a subgroup $G<$ Mod$(S)$ is \textit{convex cocompact} if for some $x\in \mathcal{T}(S)$ the orbit $G \cdot x$ is quasiconvex with respect to the
Teichm$\ddot{\textup{u}}$ller metric on $\mathcal{T}(S)$.}
\end{Definition}

We remark that from the definition, if $G$ is convex cocompact, then $G$ is finitely generated, and every infinite order element in $G$ is pseudo-Anosov, i.e., $G$ is purely pseudo-Anosov. There have been many equivalent characterizations of convex cocompactness for subgroups of Mod$(S)$.

We first recall that there is a natural simplicial complex, associated to a surface $S$, called the \textit{curve complex} $\mathcal{C}(S)$ on which Mod($S$) acts by simplicial automorphisms. We restrict attention to one-skeleton of $\mathcal{C}(S)$ whose vertices are isotopy classes of essential simple closed curves on $S$, and two distinct isotopy classes are joined by an edge if they are disjointly realizable.

\begin{rem}
\textup{
For a surface $S$ which has either genus at least 2 or at least 5 punctures, the curve complex $\mathcal{C}(S)$ is connected.
The only surfaces making non empty curve complexes disconnected with $\chi(S)<0$ are the 1-punctured torus $S_{1,1}$ and the 4-punctured sphere $S_{0,4}$. In those two cases, the curve complex $\mathcal{C}(S)$ is a countable disjoint union of points. Hence, in the case of $S=S_{1,1}$ or $S=S_{0,4}$, we alter the definition of $\mathcal{C}(S)$ by joining a pair of distinct vertices if they realize the minimal possible geometric intersection in $S$, which makes $\mathcal{C}(S)$ connected.}
\end{rem}

In \cite{KL08}, Kent and Leininger show that a finitely generated subgroup $G$ of Mod($S$) is convex cocompact if and only if an orbit map from $G$ to the curve complex $\mathcal{C}(S)$ is undistorted. This fact is independently proved by Hamenst$\ddot{\textup{a}}$dt in \cite{H05}.
Furthermore, Durham and Taylor in \cite{DT15} characterize convex cocompactness in Mod$(S)$ by using only the geometry of Mod$(S)$ itself.

\begin{pro}[Theorem 1.1 in \cite{DT15}]\label{thm2}
Let $S$ be a connected and oriented surface which is neither the 1-punctured torus nor the 4-punctured sphere, and let \textup{Mod$(S)$} be its mapping class group. Then the subgroup $G<$ \textup{Mod$(S)$} is convex cocompact if and only if it is stable.
\end{pro}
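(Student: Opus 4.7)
The plan is to route the equivalence through the curve complex $\mathcal{C}(S)$, using the Kent--Leininger/Hamenst\"adt criterion that a finitely generated $G<\textup{Mod}(S)$ is convex cocompact if and only if some (equivalently, any) orbit map $G\to\mathcal{C}(S)$ is a quasi-isometric embedding, together with the Masur--Minsky hyperbolicity of $\mathcal{C}(S)$ and the Masur--Minsky distance formula expressing $d_{\textup{Mod}(S)}(x,y)$ as a coarse sum of subsurface projection distances $d_Y(x,y)$.

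\textbf{Convex cocompact $\Rightarrow$ stable.} Assume $G$ is convex cocompact, so $G$ is hyperbolic, undistorted in $\textup{Mod}(S)$, and the orbit $G\cdot\alpha$ is a quasiconvex subset of the hyperbolic space $\mathcal{C}(S)$. Given $(k,c)$-quasigeodesics $\gamma_1,\gamma_2$ in $\Gamma(\textup{Mod}(S),S)$ with common endpoints $g,h\in G$, the Lipschitz orbit map sends them to coarsely Lipschitz paths in $\mathcal{C}(S)$ from $g\cdot\alpha$ to $h\cdot\alpha$; quasiconvexity of $G\cdot\alpha$ together with the Morse lemma in $\mathcal{C}(S)$ force both paths to fellow-travel a $\mathcal{C}(S)$-geodesic that stays uniformly close to $G\cdot\alpha$. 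The Bounded Geodesic Image Theorem then forces any proper subsurface $Y$ for which $d_Y(g,h)$ is large to have $\partial Y$ within uniformly bounded $\mathcal{C}(S)$-distance of this geodesic, and hence of $G$. Feeding the resulting control on subsurface projections back through the distance formula yields uniform fellow-traveling of $\gamma_1,\gamma_2$ in $\textup{Mod}(S)$, which is stability.

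\textbf{Stable $\Rightarrow$ convex cocompact.} Assume $G$ is stable, and argue by contrapositive. If the orbit map $G\to\mathcal{C}(S)$ fails to be a quasi-isometric embedding, the distance formula produces sequences $g_n,h_n\in G$ with $d_G(g_n,h_n)\to\infty$ while $d_{\mathcal{C}(S)}(g_n\cdot\alpha,h_n\cdot\alpha)$ grows sublinearly, so the $\textup{Mod}(S)$-distance from $g_n$ to $h_n$ is absorbed by projections to proper subsurfaces $Y_n$. Using the undistorted Dehn-twist flats about $\partial Y_n$, one constructs two $(k,c)$-quasigeodesics in $\textup{Mod}(S)$ from $g_n$ to $h_n$ that bypass each such flat on opposite sides; their separation is comparable to the $\mathcal{C}(Y_n)$-projection distance and therefore unbounded, contradicting stability. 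Equivalently, one first shows $G$ is purely pseudo-Anosov---any reducible infinite-order element would generate an undistorted flat whose two sides supply stability-violating quasigeodesics---and then invokes the Farb--Mosher/Kent--Leininger criterion.

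\textbf{Main obstacle.} The reverse direction is the hard one: stability is a fellow-traveling property inside $\textup{Mod}(S)$, while convex cocompactness is a quasi-isometric embedding condition in the collapsed model $\mathcal{C}(S)$, and the orbit map $\textup{Mod}(S)\to\mathcal{C}(S)$ wipes out the many undistorted flats of $\textup{Mod}(S)$. Bridging the two seems to require the full Masur--Minsky hierarchy machinery (distance formula, Behrstock inequality, Bounded Geodesic Image) rather than an elementary argument; the delicate step is extracting from divergent subsurface projections an explicit pair of quasigeodesics that pull apart without bound.
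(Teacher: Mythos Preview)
The paper does not prove Proposition~\ref{thm2}; it is stated purely as a background result, attributed to Durham--Taylor \cite{DT15}, and used as a black box (specifically, to obtain the implication $(1)\Rightarrow(2)$ of Theorem~\ref{thm1}). There is therefore no ``paper's own proof'' to compare against.

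Regarding your sketch on its own merits: the forward direction is plausible in outline but has a real gap. You pass from fellow-traveling of the \emph{images} in $\mathcal{C}(S)$ back to fellow-traveling in $\textup{Mod}(S)$ by ``feeding the resulting control on subsurface projections back through the distance formula,'' but the distance formula only controls pairwise distances, not the shape of an arbitrary $(k,c)$-quasigeodesic in $\textup{Mod}(S)$. You would need to argue that any such quasigeodesic has bounded projections to every proper subsurface (uniformly in terms of the endpoints), which is the substantive content of Durham--Taylor's argument and is not obvious from the tools you list.

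For the reverse direction you essentially concede the point: your ``Main obstacle'' paragraph correctly identifies that constructing an explicit pair of diverging quasigeodesics from unbounded subsurface projections is the heart of the matter, and you do not carry it out. Note also that your alternative route---show $G$ is purely pseudo-Anosov and then invoke a criterion---implicitly appeals to the result of \cite{BBKL16} (Proposition~\ref{thm3} here), which postdates \cite{DT15}; so while that argument is logically valid given the present paper's toolkit, it is not a proof of the Durham--Taylor theorem as such, and in any case the claim that a reducible element yields stability-violating quasigeodesics is again left as an assertion.
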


In \cite{BBKL16}, we have another characterization for the convex cocompactness as follows.

\begin{pro}[Main Theorem in \cite{BBKL16}]\label{thm3}
Let $S$ be an oriented, connected, finite type surface with $\chi(S)<0$ and let \textup{Mod$(S)$} be its mapping class group. A subgroup $G<$ \textup{Mod$(S)$} is convex cocompact if and only if it is finitely generated, undistorted, and purely pseudo-Anosov.
\end{pro}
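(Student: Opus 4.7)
The plan is to reduce both implications to the Kent--Leininger criterion: a finitely generated subgroup $G < \textup{Mod}(S)$ is convex cocompact if and only if the orbit map $G \to \mathcal{C}(S)$ is a quasi-isometric embedding with respect to a word metric on $G$. Granted this, the proposition amounts to comparing three metrics on $G$, namely the intrinsic word metric, the restricted Mod$(S)$-metric, and the pulled-back curve complex metric, under the hypothesis that $G$ is purely pseudo-Anosov.

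For the forward direction, if $G$ is convex cocompact then by definition some $G$-orbit in Teichm\"uller space is quasiconvex; a standard Farb--Mosher-style argument then shows the orbit map $G\to \mathcal{T}(S)$ is a quasi-isometric embedding, so $G$ is finitely generated and undistorted in Mod$(S)$. Pure pseudo-Anosovness follows from the Kent--Leininger characterization: any reducible or finite order infinite element of $G$ would have a bounded orbit in $\mathcal{C}(S)$, contradicting the quasi-isometric orbit map.

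For the reverse implication, the upper bound $d_{\mathcal{C}(S)}(\alpha, g\alpha) \leq C\,|g|_G$ is automatic from equivariance, so the real task is the lower bound. My strategy is to invoke the Masur--Minsky distance formula, which expresses $d_{\textup{Mod}(S)}(\mu, g\mu)$, up to multiplicative and additive error, as a threshold-truncated sum $\sum_{Y \subseteq S} \lfloor d_Y(\mu, g\mu) \rfloor_K$ over subsurface projections. Since $G$ is undistorted we have $|g|_G \asymp d_{\textup{Mod}(S)}(\mu, g\mu)$, so it suffices to prove that for $g \in G$ all proper subsurface projections $d_Y(\mu, g\mu)$ with $Y \subsetneq S$ are uniformly bounded. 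Once this is shown, the Masur--Minsky sum collapses onto the $Y = S$ term, which is $d_{\mathcal{C}(S)}(\alpha, g\alpha)$ up to constants, and the orbit map to $\mathcal{C}(S)$ is a quasi-isometric embedding as required.

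The main obstacle is therefore the bounded subsurface projection property for a purely pseudo-Anosov subgroup. I would proceed by contradiction and use the Bestvina--Bromberg--Fujiwara projection complex machinery. If proper projections were unbounded, one extracts a sequence $g_n \in G$ and proper subsurfaces $Y_n \subsetneq S$ with $d_{Y_n}(\mu, g_n\mu) \to \infty$, while $|g_n|_G$ is controlled via undistortion; Behrstock's inequality then organizes the $Y_n$ into long alignments along a quasigeodesic in Mod$(S)$. A compactness argument, applied to the finitely many $G$-orbits of candidate subsurfaces that can accumulate, should ultimately produce an element of $G$ stabilizing an essential subsurface, violating pure pseudo-Anosovness. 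The delicate point is to ensure that unbounded projections really force a reducing subsurface for some element of $G$, rather than merely long excursions by unrelated elements; the proper $G$-action on the projection complex provides precisely this rigidity.
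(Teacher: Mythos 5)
This proposition is not proved in the paper at all: it is quoted verbatim as the Main Theorem of \cite{BBKL16}, so the only fair comparison is with the actual Bestvina--Bromberg--Kent--Leininger argument. Your outline does reproduce the overall architecture of that proof: the forward implication is standard and correct as you describe it, and for the converse you correctly reduce, via the Kent--Leininger/Hamenst\"adt criterion (\cite{KL08}, \cite{H05}) and the Masur--Minsky distance formula, to the statement that a finitely generated, undistorted, purely pseudo-Anosov subgroup has uniformly bounded projections to all proper subsurfaces, after which the distance formula collapses to the $\mathcal{C}(S)$ term and convex cocompactness follows.

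The genuine gap is that this bounded-projection statement is precisely the technical heart of \cite{BBKL16}, and your proposal asserts it rather than proves it. The paragraph beginning with the contradiction argument is a hope, and the specific mechanisms you invoke do not deliver it as stated: there are finitely many \textup{Mod}$(S)$-orbits of essential subsurfaces but in general infinitely many $G$-orbits (the interesting case is $[\textup{Mod}(S):G]=\infty$), so the advertised compactness over ``finitely many $G$-orbits of candidate subsurfaces'' is not available; no proper action of $G$ on a Bestvina--Bromberg--Fujiwara projection complex is established, nor is it clear why such an action would be proper or why it would supply the claimed rigidity; and Behrstock's inequality only orders large projections along a path in \textup{Mod}$(S)$ --- it does not by itself produce an element of $G$ responsible for them, which is exactly the difficulty you acknowledge but do not resolve (large projections to the $Y_n$ could a priori be created without any infinite-order element of $G$ preserving a subsurface, and pure pseudo-Anosovness says precisely that no such element exists, so the contradiction must be extracted far more delicately). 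Closing this gap is essentially the entire content of \cite{BBKL16}; as written, your argument is a correct reduction plus an unproved key lemma, so it does not constitute a proof of the proposition.
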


\section{Some properties of Morse subgroups}\label{s3}

In this section, we discuss some properties of Morse subgroups we need to prove Theorem \ref{thm1}. For the details, see Section 4 in \cite{T17}, where Morse subgroups are studied under the name strongly quasiconvex subgroups.
The following proposition tells us what the relation between Morse subgroups and stable subgroups is.

\begin{pro}[Proposition 4.3 of \cite{T17}]\label{pr0}
Let $G$ be a finitely generated group and let $H$ be an undistorted subgroup of $G$. Then $H$ is stable in $G$ if and only if $H$ is Morse and hyperbolic.
\end{pro}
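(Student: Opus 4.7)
The plan is to prove both directions by exploiting the hypothesis that $H$ is undistorted in $G$ to transfer quasigeodesics and Hausdorff estimates between $H$ and $G$. Throughout, fix finite generating sets of $G$ and $H$ so that the inclusion $\iota:(H,d_T)\hookrightarrow(G,d_S)$ is a $(K,C)$-quasi-isometric embedding.

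For the direction ``stable $\Rightarrow$ Morse and hyperbolic,'' stability yields the Morse property almost for free: given a $(k,c)$-quasigeodesic $\gamma$ in $\Gamma(G,S)$ with endpoints $x,y\in H$, a geodesic in $(H,d_T)$ from $x$ to $y$ embeds via $\iota$ as a $(K,C)$-quasigeodesic $\sigma$ in $\Gamma(G,S)$ whose image lies inside $H$; stability then forces $\gamma$ to lie in the $L$-neighborhood of $\sigma$, hence of $H$. To obtain hyperbolicity of $H$, I would verify the uniform Morse condition inside $H$ itself: any two $(k,c)$-quasigeodesics $\alpha_H,\beta_H$ in $(H,d_T)$ with common endpoints push forward to $(k',c')$-quasigeodesics in $\Gamma(G,S)$ with the same endpoints in $H$. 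Stability bounds their Hausdorff $d_S$-distance by some $L=L(k',c')$, and undistortion promotes this to a bound of $K(L+C)$ in $d_T$. Since $H$ is a geodesic metric space in which the Morse lemma holds uniformly, the standard converse to the Morse lemma gives that $(H,d_T)$ is hyperbolic.

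For the direction ``Morse and hyperbolic $\Rightarrow$ stable,'' let $\alpha,\beta$ be two $(k,c)$-quasigeodesics in $\Gamma(G,S)$ with common endpoints in $H$. The Morse property provides $M=M(k,c)$ so that both lie in the $M$-neighborhood of $H$. I would sample $\alpha$ and $\beta$ at integer times, replace each sample point by a nearest point of $H$ to obtain vertex sequences $\alpha',\beta'\subseteq H$, and check that these are $(k'',c'')$-quasigeodesics in $\Gamma(G,S)$ with constants depending only on $k,c,M$. Since $H$ is undistorted, $\alpha'$ and $\beta'$ are $(k''',c''')$-quasigeodesics in $(H,d_T)$; since $H$ is hyperbolic, the Morse lemma in $H$ gives that they lie within uniform $d_T$-distance, hence uniform $d_S$-distance, of one another. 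A final triangle inequality using the $M$-proximity $\alpha\leftrightarrow\alpha'$ and $\beta\leftrightarrow\beta'$ produces the stability constant.

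The routine but most delicate step in both directions is constant tracking. In particular, the main technical obstacle is to verify that the discretized, projected-to-$H$ paths $\alpha',\beta'$ remain genuine quasigeodesics: the upper bound on their lengths is immediate from the quasigeodesic property of $\alpha,\beta$, but the lower bound requires a coarse injectivity estimate ensuring that nearby sample points of a quasigeodesic do not project to points of $H$ that are too close to one another. No single step is conceptually deep, but these bookkeeping arguments, performed carefully so that all constants depend only on $(k,c,K,C,M)$, are where the real work lies.
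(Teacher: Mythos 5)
The paper does not prove this proposition; it is quoted verbatim from Tran (Proposition 4.3 of \cite{T17}), so there is no in-paper argument to compare against. Your proposal is essentially the standard proof (and the same in spirit as Tran's): push geodesics of $(H,d_T)$ into $\Gamma(G,S)$ via undistortion to get the Morse property from stability, use the geodesic-stability (``converse Morse lemma'', i.e.\ Bonk-type) characterization of hyperbolicity to get hyperbolicity of $H$, and conversely use the Morse neighborhood plus nearest-point projection to $H$ and the Morse lemma in the hyperbolic group $H$ to recover stability; you should cite the geodesic-stability criterion explicitly, as it is the one genuinely nontrivial external input. One small correction: the ``coarse injectivity'' issue you flag is not actually an obstacle --- for the projected sample sequence the lower quasigeodesic bound is immediate from the triangle inequality, since $d_S(\alpha'(i),\alpha'(j))\geq d_S(\alpha(i),\alpha(j))-2M\geq |i-j|/k-c-2M$, so $\alpha'$ is a $(k,c+2M)$-quasigeodesic with no further estimate needed.
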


The following proposition gives us a way to get another Morse subgroup from a Morse subgroup.

\begin{pro}[Theorem 4.11 in \cite{T17}]\label{pr1}
Let $G$ be a finitely generated group and let $A$ be an undistorted subgroup of $G$. If $H$ is a Morse subgroup of $G$, then $A\cap H$ is Morse in $A$.
\end{pro}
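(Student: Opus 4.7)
My plan is to pass the quasigeodesic through $G$ via the undistortion of $A$, apply the Morse property of $H$ in $G$, and then descend back into $A$ via a coarse intersection lemma relating $G$-neighborhoods of $H$ to $A$-neighborhoods of $A\cap H$.

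Let $\alpha$ be a $(k,c)$-quasigeodesic in $\Gamma(A,T)$ with endpoints $x,y\in A\cap H$. Since $A$ is undistorted, the inclusion $(A,d_A)\hookrightarrow(G,d_G)$ is a $(K,C)$-quasi-isometric embedding, and so $\alpha$, viewed in $G$, is a $(k',c')$-quasigeodesic in $\Gamma(G,S)$ with $k',c'$ depending only on $k,c,K,C$. Its endpoints lie in $H$, so the Morse property of $H$ in $G$ gives $\alpha\subseteq N^G_M(H)$ for $M=M(k',c')$.

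The heart of the argument is the following coarse intersection lemma: for each $R\ge 0$ there exists $R'=R'(R)$ such that $A\cap N^G_R(H)\subseteq N^A_{R'}(A\cap H)$. Granted this, $\alpha\subseteq N^A_{R'(M)}(A\cap H)$, which is exactly the Morse property of $A\cap H$ in $A$ with gauge $M'(k,c):=R'(M(k',c'))$. To prove the lemma, let $a\in A$ with $d_G(a,h)\le R$ for some $h\in H$. Then $g:=h^{-1}a\in B^G_R(e)$ and $a\in Hg$, so $a$ lies in one of the at most $|B^G_R(e)|$ right cosets of $H$ in $G$ of the form $Hg'$ with $g'\in B^G_R(e)$. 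A direct computation ($a_1,a_2\in A\cap Hg'$ forces $a_1a_2^{-1}\in A\cap H$) shows each nonempty $A\cap Hg'$ is a single right coset of $A\cap H$ in $A$, and $d_A(a,A\cap H)$ depends only on this coset and equals the minimal $A$-length among its representatives. It therefore suffices to bound that minimal $A$-length by a function of $R$. Suppose a minimal representative $b$ has $|b|_A$ large. The $A$-geodesic $\sigma=(e=p_0,p_1,\dots,p_N=b)$ is a $(K,C)$-quasigeodesic in $G$, and its concatenation with the short $G$-bridge from $b$ to $b(g')^{-1}\in H$ is a $(k'',c'')$-quasigeodesic in $G$ with $k''\sim K|T|_G$ and $c''\sim C+R$, independent of $|b|_A$. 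Both endpoints of this concatenation lie in $H$, so the Morse property forces every $p_k$ into $N^G_{M(k'',c'')}(H)$, placing each $p_k$ in one of at most $|B^G_{M(k'',c'')}(e)|$ right cosets of $A\cap H$ in $A$. Once $N$ exceeds this number, pigeonhole produces indices $i<j$ with $p_ip_j^{-1}=u\in A\cap H$; replacing the tail $p_j,\dots,p_N$ by $up_j=p_i,up_{j+1},\dots,up_N$ yields an $A$-path from $e$ to $ub\in A\cap Hg'$ of length $i+(N-j)<N$, contradicting minimality of $b$. Hence $|b|_A\le|B^G_{M(k'',c'')}(e)|=:R'(R)$.

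The main technical obstacle is the construction and use of this auxiliary quasigeodesic: verifying that the concatenation of the $A$-geodesic with the short $G$-bridge is a genuine quasigeodesic in $G$ with constants depending only on $R$ (and not on $|b|_A$), and then extracting an honest length reduction in $A$ from the coset repetition produced by pigeonhole. Both steps use the undistortion of $A$ in $G$ together with the Morse property of $H$ in an essential way; once they are in place, the rest of the argument is routine bookkeeping.
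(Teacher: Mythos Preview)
The paper does not supply its own proof of this proposition; it is quoted without argument as Theorem~4.11 of Tran~\cite{T17}. So there is nothing in the paper to compare against, and the question reduces to whether your proof stands on its own.

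It does. The reduction to the coarse intersection lemma
\[
A\cap N^G_R(H)\ \subseteq\ N^A_{R'(R)}(A\cap H)
\]
is exactly the right decomposition, and your pigeonhole-and-shorten proof of that lemma is sound. Two small points worth making explicit. First, when you view the $A$-geodesic $\sigma$ as a path in $\Gamma(G,S)$ you must replace each $A$-edge by a $G$-geodesic of length at most $\max_{t\in T}|t|_S$; this only adjusts the quasigeodesic constants by fixed factors, as you indicate with the notation $k''\sim K|T|_G$. Second, the concatenation with the bridge of $G$-length $\le R$ is a $(K,\,C+2R)$-quasigeodesic in $G$: appending a bounded tail worsens only the additive constant, so the Morse gauge $M(k'',c'')$ genuinely does not depend on $|b|_A$. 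With those constants fixed, every vertex $p_k$ lands in one of at most $|B^G_{M(k'',c'')}(e)|$ right cosets of $A\cap H$ in $A$, and the coset repetition produced by pigeonhole yields an element $ub\in A\cap Hg'$ with strictly smaller $A$-length, contradicting minimality of $b$. This is the standard way to prove such statements and matches the approach in the literature.
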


Now we recall some definitions for a subgroup of an arbitrary group.

\begin{Definition}
\textup{Let $G$ be a group and let $H$ be a subgroup of $G$. }

\begin{enumerate}
\normalfont{
 \item Conjugates $g_1Hg_1^{-1},\dots,$ $ g_kHg_k^{-1}$ are \textit{essentially distinct} if the cosets $g_1H,\dots,g_kH$ are distinct.
\item $H$ has \textit{finite height} $n$ in $G$ if the intersection of every $(n+1)$ essentially distinct conjugates is finite and $n$ is minimal possible.
\item $H$ has \textit{finite width} $n$ in $G$ if $n$ is the maximal cardinality of the set $\{g_iH :|g_iHg_i^{-1}\cap g_jHg_j^{-1}|=\infty \}$, where $\{g_iH \}$ ranges over all collections of distinct cosets.}

\end{enumerate}
\end{Definition}

 We note that every finite subgroup and every subgroup of finite index have finite height and width, and every infinite normal subgroup of infinite index has infinite height and width. The next proposition states that Morse subgroups are far from being normal.

\begin{pro}[Theorem 1.2 in \cite{T17}]\label{pr2}
Let $G$ be a finitely generated group and let $H$ be a Morse subgroup. Then $H$ has finite height and finite width.
\end{pro}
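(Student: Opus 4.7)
The plan is to argue by contradiction, constructing from the infinite-width assumption a family of uniform quasi-geodesic detours with endpoints on $H$ that escape every neighborhood of $H$, contradicting the Morse property. Since infinite height implies infinite width --- if $g_0Hg_0^{-1},\dots,g_nHg_n^{-1}$ are essentially distinct conjugates with infinite common intersection then the cosets $g_0H,\dots,g_nH$ have pairwise infinite conjugate intersections, so width is at least $n+1$ --- it suffices to handle the width case.

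Suppose $H$ has infinite width, witnessed by distinct cosets $\{g_\alpha H\}$ with $g_\alpha Hg_\alpha^{-1}\cap g_\beta Hg_\beta^{-1}$ infinite for $\alpha\neq\beta$; I normalize $g_1=e$. For each $\alpha$ choose $k_\alpha\in H\cap g_\alpha Hg_\alpha^{-1}$ with $|k_\alpha|_S$ much larger than $|g_\alpha|_S$, and write $k_\alpha=g_\alpha h_\alpha g_\alpha^{-1}$ with $h_\alpha\in H$. Build the path $\gamma_\alpha$ from $e$ to $k_\alpha$ in $\Gamma(G,S)$ by concatenating a geodesic from $e$ to $g_\alpha$, a translated $H$-geodesic in the coset $g_\alpha H$ from $g_\alpha$ to $g_\alpha h_\alpha=k_\alpha g_\alpha$, and a geodesic from $k_\alpha g_\alpha$ to $k_\alpha$. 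By undistortion of $H$ in $G$ (which follows from $H$ being Morse) the middle segment has $\Gamma(G,S)$-length comparable to $|h_\alpha|_S$, and since conjugation by $g_\alpha$ changes word length by at most $2|g_\alpha|_S$ the values $|h_\alpha|_S$ and $|k_\alpha|_S$ are comparable; the total length of $\gamma_\alpha$ is thus of the same order as $|k_\alpha|_S=d_S(e,k_\alpha)$, and a standard case analysis on pairs of points shows $\gamma_\alpha$ is a $(K,C)$-quasi-geodesic with constants independent of $\alpha$. The middle segment lies entirely in the coset $g_\alpha H$, at distance $d(g_\alpha H,H)$ from $H$.

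To produce the contradiction, I need $d(g_\alpha H,H)\to\infty$ along a subsequence. A priori the infinite-width witnesses might all cluster in a bounded neighborhood of $H$: if infinitely many $g_\alpha H$ lie within $R$ of $H$, then writing each $g_\alpha=h_\alpha b$ for a fixed $b\in B(e,R)$ and $h_\alpha\in H$ forces $[H:H\cap bHb^{-1}]=\infty$, and by Proposition~\ref{pr1} applied to the Morse subgroup $bHb^{-1}$ of $G$ and the undistorted $A=H$, the intersection $H\cap bHb^{-1}$ is Morse of infinite index in $H$. Ruling out this clustering is the main obstacle of the proof: one approach is to iterate Proposition~\ref{pr1} together with the transitivity of the Morse condition to descend through a chain of Morse subgroups, eventually forcing a contradiction either with finite generation of $H$ or with a direct Morse violation in $\Gamma(G,S)$. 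Once clustering is excluded, $d(g_\alpha H,H)\to\infty$ along a subsequence, and the $\gamma_\alpha$ provide uniform quasi-geodesics with endpoints in $H$ escaping every neighborhood of $H$, contradicting that $H$ is Morse in $\Gamma(G,S)$.
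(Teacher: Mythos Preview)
The paper does not prove this proposition at all: it is stated as Theorem~1.2 of Tran~\cite{T17} and is used as a black box. So there is no ``paper's own proof'' to compare against, and your attempt is an independent proof rather than a reconstruction.

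That said, your argument has a genuine gap. You correctly isolate the key difficulty---the witnessing cosets $g_\alpha H$ might all lie in a bounded neighborhood of $H$---but you do not resolve it. You reduce the clustering case to the existence of a Morse subgroup $H\cap bHb^{-1}$ of infinite index in $H$, and then write that ``one approach is to iterate Proposition~\ref{pr1} together with the transitivity of the Morse condition \ldots\ eventually forcing a contradiction either with finite generation of $H$ or with a direct Morse violation.'' This is not a proof: you have not specified what quantity decreases under the iteration, why the process terminates, or how either of the two suggested contradictions actually arises. Having a Morse subgroup of infinite index is not in itself contradictory (any infinite hyperbolic group has many such), so the recursion as stated could go on forever without producing anything. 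The non-clustering branch of your argument is essentially sound, but since the clustering branch is where the real content lies, the proposal as written does not establish the proposition.

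A secondary issue: your claim that $\gamma_\alpha$ is a $(K,C)$-quasigeodesic with constants independent of $\alpha$ is asserted via ``a standard case analysis,'' but the constants you would obtain depend on the ratio $|g_\alpha|_S/|k_\alpha|_S$, which you control only by the soft hypothesis ``$|k_\alpha|_S$ much larger than $|g_\alpha|_S$.'' You need to make this quantitative and uniform in $\alpha$, which in turn requires knowing in advance that $|g_\alpha|_S$ can be taken bounded or that $d(g_\alpha H,H)$ is bounded below---exactly the clustering issue again.
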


Motivated by Example \ref{ex1}, we obtain the following lemma. This is easily derived from Proposition \ref{pr2}.

\begin{lem}
Let $G$ be a finitely generated group and let $H$ be a Morse subgroup of $G$. For $g\in G$ and $h\in H$, if $g$ and $h$ commute and $\langle h, g\rangle \cong \mathbb{Z}^2$ is undistorted in $G$, then there is an integer $m>0$ such that $g^m$ is contained in $H$.
\end{lem}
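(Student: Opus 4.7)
The plan is to argue by contradiction, exploiting the finite width conclusion of Proposition \ref{pr2}. Suppose, for contradiction, that no positive power of $g$ lies in $H$. Since $H$ is closed under inversion, this also forces $g^{-m} \notin H$ for every $m > 0$, so $g^k \notin H$ for every nonzero integer $k$. Consequently the cosets $\{g^i H\}_{i \in \mathbb{Z}}$ are pairwise distinct, for $g^i H = g^j H$ would imply $g^{i-j} \in H$.

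The key observation is that $g$ and $h$ commute. For every $i \in \mathbb{Z}$, conjugation by $g^i$ fixes $h$, giving $h = g^i h g^{-i} \in g^i H g^{-i}$. Therefore the cyclic subgroup $\langle h \rangle$ is contained in $g^i H g^{-i} \cap g^j H g^{-j}$ for all $i, j \in \mathbb{Z}$. Since $\langle h, g \rangle \cong \mathbb{Z}^2$, the element $h$ has infinite order, so each such pairwise intersection is infinite. This exhibits an infinite collection $\{g^i H\}_{i \in \mathbb{Z}}$ of pairwise distinct cosets of $H$ whose associated conjugates of $H$ pairwise intersect in an infinite subgroup, directly contradicting the finite width of $H$ from Proposition \ref{pr2}. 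The contradiction yields $g^m \in H$ for some $m > 0$.

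I do not anticipate any real obstacle; the only items to verify are the distinctness of the cosets and the infinitude of $\langle h \rangle$, both of which are immediate from the hypotheses. The undistortion of $\langle h, g \rangle$ does not seem to play a role in the argument above; it is natural in the mapping class group setting that motivates the lemma, and it would also permit an alternative route via Proposition \ref{pr1}, since applying it with $A = \langle h, g \rangle$ would force $A \cap H$ to be a Morse subgroup of $\mathbb{Z}^2$, hence (by the reasoning of Example \ref{ex1}, which rules out infinite cyclic Morse subgroups of $\mathbb{Z}^2$) of finite index in $A$, whence $g^m \in A \cap H \subseteq H$ for some $m > 0$.
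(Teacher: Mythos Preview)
Your proof is correct and essentially the same as the paper's: both argue by contradiction and invoke Proposition \ref{pr2}, noting that $\langle h\rangle$ lies in every conjugate $g^iHg^{-i}$ while the cosets $g^iH$ are pairwise distinct. The only cosmetic difference is that you phrase the contradiction via finite \emph{width} whereas the paper uses finite \emph{height}; your remark that the undistortion hypothesis plays no role in this argument is also accurate, as the paper's proof does not use it either.
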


\begin{proof}
Suppose that $g^m$ is not contained in $H$ for every $m>0$. Then we have $g^iH\neq g^jH$ if $i\neq j$. By Proposition \ref{pr2}, the subgroup $H$ has finite height $n$ in $G$ for some $n$. Since $h\in H$ and $g$ commute, we have $\langle h\rangle \in g^iHg^{-i}$ for $i=1,\dots, n+1$, and the intersection $\displaystyle\cap_{i=1}^{n+1}\, g^iHg^{-i}$ is infinite. This contradicts that $H$ has finite height $n$ in $G$. \end{proof}

\section{Proof of Theorem \ref{thm1}}\label{s4}

Throughout this section, we assume that $S$ is an oriented, connected, finite type surface with negative Euler characteristic which is neither the 1-punctured torus nor the 4-punctured sphere, and Mod$(S)$ is its mapping class group. For an essential simple closed curve $\alpha$, we denote by $D_\alpha$ the Dehn twist about $\alpha$, and denote by $C(D_\alpha)$ the centralizer.

The implication $``(1)\Rightarrow (2)$'' in Theorem \ref{thm1} is straight forward from Proposition \ref{thm2} and Proposition \ref{pr0}. In order to prove the reverse implication $``(2)\Rightarrow (1)$'', we need the following two lemmas.

\begin{lem}\label{lem1}
Let $\alpha$ be an essential simple closed curve on $S$. Then if $K$ is a Morse subgroup of $C(D_\alpha)$ then $K$ is either finite or has finite index in $C(D_\alpha)$.
\end{lem}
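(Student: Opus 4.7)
The plan is to exploit the fact that $D_\alpha$ lies in the center of $C(D_\alpha)$, together with Proposition \ref{pr2}, which gives that a Morse subgroup of a finitely generated group has finite height. (This uses that $C(D_\alpha)$ is itself finitely generated, which holds for centralizers of Dehn twists in \textup{Mod}$(S)$.) Throughout, I assume $K$ is infinite and aim to conclude that $K$ has finite index in $C(D_\alpha)$.

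The first step is to show that some nonzero power of $D_\alpha$ lies in $K$. Recall that $D_\alpha$ has infinite order and commutes with every element of $C(D_\alpha)$, so $D_\alpha^{i}\,K\,D_\alpha^{-i}=K$ for every $i\in\mathbb Z$. Suppose for contradiction that $D_\alpha^m\notin K$ for every $m\neq 0$. Then the cosets $D_\alpha^{i}K$ with $i\in\mathbb Z$ are pairwise distinct, so the conjugates $\{D_\alpha^{i}K D_\alpha^{-i}\}_{i\in\mathbb Z}$ give infinitely many essentially distinct conjugates of $K$. Since each of these conjugates equals $K$, the intersection of any finite subcollection equals $K$, which is infinite. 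This contradicts the finite height of $K$ supplied by Proposition \ref{pr2}. Hence $D_\alpha^m\in K$ for some integer $m>0$.

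For the second step, I use centrality once more: for every $g\in C(D_\alpha)$ the conjugate $gKg^{-1}$ contains $gD_\alpha^{m}g^{-1}=D_\alpha^{m}$, so the intersection of any collection of conjugates of $K$ contains $\langle D_\alpha^m\rangle$, which is infinite. If $n$ is the height of $K$, then no collection of $n+1$ essentially distinct conjugates of $K$ can exist (their intersection would be infinite). Therefore the set of cosets $\{gK:g\in C(D_\alpha)\}$ has cardinality at most $n$, giving $[C(D_\alpha):K]\leq n$.

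The main obstacle is conceptual rather than technical: one must recognize that the centrality of $D_\alpha$ rigidly constrains the conjugation action on $K$ in two complementary ways. It forces the cyclic subgroup $\langle D_\alpha\rangle$ to intersect $K$ in a subgroup of finite index (otherwise the cosets $D_\alpha^{i}K$ give too many essentially distinct, in fact equal, conjugates), and it then forces the index $[C(D_\alpha):K]$ to be bounded by the height (since all conjugates share the infinite central subgroup $\langle D_\alpha^m\rangle$). Beyond this observation, the argument is a direct application of Proposition \ref{pr2}, so no further structure theory of $C(D_\alpha)$ — and in particular no appeal to its description as an extension of the mapping class group of the cut surface — is needed.
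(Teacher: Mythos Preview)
Your proof is correct and follows essentially the same approach as the paper's: both arguments use centrality of $D_\alpha$ together with Proposition~\ref{pr2} to first force some $D_\alpha^m\in K$, and then to bound the index by observing that every conjugate of $K$ contains $\langle D_\alpha^m\rangle$. Your version is slightly more explicit in the first step and yields the quantitative bound $[C(D_\alpha):K]\le n$, but the underlying argument is the same.
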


\begin{proof}
Suppose that $K$ is infinite and has infinite index in $C(D_\alpha)$. Since every element in $K$ commutes with $D_\alpha$, and $K$ has finite height in $C(D_\alpha)$ by Proposition \ref{pr2}, there exists $m>0$ such that $D_\alpha^m \in K$. Also, we have $[\,C(D_\alpha) : K\,]=\infty$ by the assumption. Therefore, there exists an infinite sequence $(g_n)$ of distinct elements in $C(D_\alpha)$ such that $g_iK\neq g_jK$ for $i\neq j$. Then from $D_\alpha^m \in K$ and $g_i\in C(D_\alpha)$, we have $\langle D_\alpha^m \rangle \subset g_iKg_i^{-1}$ for all $i$, and then the intersection $\displaystyle\cap_{i=1}^{\infty}\, g_iKg_i^{-1}$ is infinite. This contradicts that $K$ has finite height in $C(D_\alpha)$.
\end{proof}

\begin{lem}\label{lem2}
Let $\alpha$ be an essential simple closed curve on $S$. Then for each $g_0$ and $g$ in \textup{Mod$(S)$}, there exists a sequence of subgroups $g_0C(D_\alpha)g_0^{-1}=Q_0, Q_1\dots, Q_m= gC(D_\alpha)g^{-1}$ of \textup{Mod$(S)$} such that $Q_{i}\cap Q_{i+1}$ is infinite for each $i\in \{0,1,\dots,m-1\}$.
\end{lem}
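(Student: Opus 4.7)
The plan is to reduce Lemma \ref{lem2} to the connectedness of the curve complex $\mathcal{C}(S)$. The key observation is the standard identity $h D_\beta h^{-1} = D_{h\beta}^{\pm 1}$ for any $h \in \mathrm{Mod}(S)$ and any essential simple closed curve $\beta$ on $S$, which immediately yields $h\, C(D_\beta)\, h^{-1} = C(D_{h\beta})$. Applying this to $h=g_0$ and $h=g$, the endpoints of the desired chain are $Q_0 = C(D_{\beta_0})$ and $Q_m = C(D_{\beta_m})$, where $\beta_0 = g_0\alpha$ and $\beta_m = g\alpha$. So the problem becomes: connect the two essential simple closed curves $\beta_0$ and $\beta_m$ by a chain of essential simple closed curves whose consecutive centralizers meet in an infinite subgroup.

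First I would record the following elementary fact that produces infinite intersections: if $\beta$ and $\gamma$ are essential simple closed curves on $S$ that can be realized disjointly, then $D_\beta$ and $D_\gamma$ commute, so $D_\beta \in C(D_\gamma)$; combined with $D_\beta \in C(D_\beta)$, this gives $\langle D_\beta \rangle \subseteq C(D_\beta) \cap C(D_\gamma)$. Since $D_\beta$ has infinite order in $\mathrm{Mod}(S)$, the intersection is infinite. Thus any edge in the curve complex $\mathcal{C}(S)$ between two vertices $\beta, \gamma$ gives rise to an infinite intersection $C(D_\beta) \cap C(D_\gamma)$.

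Next I would invoke the hypothesis on $S$: because $S$ is neither $S_{1,1}$ nor $S_{0,4}$, the curve complex $\mathcal{C}(S)$ is connected (this is the standard result of Harvey, cited in the background section). Therefore there is a finite edge path $\beta_0, \beta_1, \ldots, \beta_m$ in $\mathcal{C}(S)$ starting at $g_0\alpha$ and ending at $g\alpha$, in which consecutive $\beta_i$ and $\beta_{i+1}$ are disjointly realizable essential simple closed curves. Setting $Q_i := C(D_{\beta_i})$ then produces a sequence of subgroups of $\mathrm{Mod}(S)$ with $Q_0 = g_0 C(D_\alpha) g_0^{-1}$ and $Q_m = g C(D_\alpha) g^{-1}$, and by the preceding paragraph each $Q_i \cap Q_{i+1}$ contains $\langle D_{\beta_i}\rangle$ and is therefore infinite.

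No substantial obstacle is expected: the proof is essentially a dictionary between the combinatorics of $\mathcal{C}(S)$ and centralizers of Dehn twists. The only point that requires care is the exclusion of $S_{1,1}$ and $S_{0,4}$, which is built into the running hypothesis on $S$ and is precisely what guarantees connectedness of $\mathcal{C}(S)$ in the standard edge definition.
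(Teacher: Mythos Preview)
Your proposal is correct and follows essentially the same approach as the paper: both use the identity $hC(D_\alpha)h^{-1}=C(D_{h\alpha})$ to identify the endpoints, take an edge path in the connected curve complex $\mathcal{C}(S)$ from $g_0\alpha$ to $g\alpha$, set $Q_i=C(D_{\gamma_i})$, and observe that disjoint curves give commuting Dehn twists, so $\langle D_{\gamma_i}\rangle\subseteq Q_i\cap Q_{i+1}$ is infinite. Your write-up is in fact slightly cleaner in making the conjugation identity explicit at the outset.
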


\begin{proof}
Pick a path between $g_0\cdot\alpha$ to $g\cdot\alpha$ in the curve complex $\mathcal{C}(S)$ of $S$. For every vertex on the path, we pick a representative curve in the corresponding isotopy class of essential simple closed curves. Then there is a finite sequence of curves of $g_0\cdot\alpha=\gamma_0, \gamma_1, \dots , \gamma_m=g\cdot\alpha$ such that $\gamma_{i-1}$ is disjoint from $ \gamma_i$ for each $i$. Now we consider the sequence of subgroups $ g_0C(D_\alpha)g_0^{-1}$, $C(D_{\gamma_{1}}),\, \dots, C(D_{\gamma_{m-1}}),\, gC(D_\alpha)g^{-1}$. We have $\langle D_{\gamma_{i}}\rangle \subset C(D_{\gamma_{i}})\cap C(D_{\gamma_{i+1}})$ for $i\in \{1,2,\dots, m-2 \}$.
Since $g_0 D_\alpha g_0^{-1}=D_{g_{0}\cdot\alpha}$ and $gD_\alpha g^{-1}=D_{g\cdot\alpha}$, we have $\langle D_{\gamma_0}\rangle \subset g_0C(D_\alpha)g_0^{-1} \cap C(D_{\gamma_{1}})$ and $\langle D_{\gamma_m}\rangle \subset C(D_{\gamma_{m-1}})\cap gC(D_\alpha)g^{-1} $. This means the intersection of every two consecutive subgroups on the sequence is infinite.
\end{proof}

\begin{rem}
\textup{The centralizer of the Dehn twist about an essential simple closed curve is undistorted in Mod$(S)$ followed by Masur-Minsky distance formula in the marking graph (see \cite{MM99, BKMM12}). We note that the proof of Theorem \ref{thm1} is similar to the proof of Proposition 8.18 in \cite{T17}. In case of a right-angled Artin group $A_\Gamma$, Tran used a star subgroup to prove that stability is equivalent to being Morse. On the other hand, in Mod$(S)$, we can use the centralizer of a curve which plays the same role of the cone-off vertex in a star subgroup of $A_\Gamma$.}
\end{rem}
\vspace{0.2cm}

\begin{proof}[Proof of $``(2)\Rightarrow (1)$'' in Theorem \ref{thm1}]

Suppose that $G$ is Morse of infinite index but not convex cocompact. By Proposition \ref{thm3}, $G$ is not purely pseudo-Anosov. Without loss of generality, we may assume that there exists a reducible element $h\in G$ of infinite order such that $h$ fixes an essential simple closed curve $\alpha$ on $S$, i.e., $h(\alpha)=\alpha$. Let $D_\alpha$ be the Dehn twist about $\alpha$, and let $C(D_\alpha)$ be the centralizer of $D_\alpha$. Since $\langle h \rangle \subset G\cap C(D_\alpha)$, it is sufficient to show that $G\cap C(D_\alpha)$ is finite to derive a contradiction. In fact, we will show that for every $g\in\,$Mod$(S)$, $g^{-1}Gg\cap C(D_\alpha)$ is finite. Then in particular, if $g$ is the identity then $G\cap C(D_\alpha)$ is finite.

For a contradiction, suppose that there exists $g_0\,$$\in\,$Mod$(S)$ such that $H_0=g_0^{-1}Gg_0\cap C(D_\alpha)$ is infinite. Since $g_0^{-1}Gg_0$ is Morse and $C(D_\alpha)$ is undistorted in Mod$(S)$, $H_0$ is Morse in $C(D_\alpha)$ by Proposition \ref{pr1}. Take $K=H_0$ in Lemma \ref{lem1}, and then we have $[C(D_\alpha): H_0]$ is finite.

Now we claim that for every $g\in$Mod$(S)$, $g^{-1}Gg\cap C(D_\alpha)$ has finite index of $C(D_\alpha)$.
By Lemma \ref{lem2}, there is a sequence of subgroups $ g_0C(D_\alpha)g_0^{-1}=Q_0, Q_1,\dots, Q_m= gC(D_\alpha)g^{-1}$ such that $|Q_{i}\cap Q_{i+1}|=\infty$ for each $i\in \{0,1,\dots,m-1\}$.
Then since $[\,g_0C(D_\alpha)g_0^{-1}: G\cap g_0C(D_\alpha)g_0^{-1}\,]=[C(D_\alpha): H_0]<\infty$ and $|g_0C(D_\alpha)g_0^{-1} \cap Q_1|=\infty$, we have $G\,\cap \,Q_1$ is not finite.
Indeed, $Q_1$ is the centralizer of an essential simple closed curve in the proof of Lemma \ref{lem2}. Therefore, $Q_1$ is undistorted in Mod$(S)$ and $G\cap Q_1$ is Morse in $Q_1$. Then by Lemma \ref{lem1}, $G\cap Q_1$ has finite index in $Q_1$. By repeating this process, we end up with getting $[C(D_\alpha): g^{-1}Gg\cap C(D_\alpha)]=[gC(D_\alpha)g^{-1}: G\cap gC(D_\alpha)g^{-1}]=[Q_m : G\cap Q_m]<\infty$.

By Proposition \ref{pr2}, $G$ has finite height $k$ for some $k$ in Mod$(S)$. Since $[$Mod$(S)$$:G]=\infty$, there exist $k+1$ distinct elements $g_1, \dots,g_{k+1}$ of Mod$(S)$ such that $g_iG\neq g_j G$ for $i\neq j$. Then we have $[C(D_\alpha) : {g_i}^{-1}Gg_i\cap C(D_\alpha)]< \infty$ for all $g_i$ where $i=1,\dots,k+1$. It follows that $[C(D_\alpha) : (\displaystyle\cap_{i=1}^{k+1}\, {g_i}^{-1}Gg_i)\cap C(D_\alpha)]<\infty$. However, this means the intersection $\displaystyle\cap_{i=1}^{k+1}\, {g_i}^{-1}Gg_i$ is infinite, which contradicts that $G$ has finite height $k$ in Mod$(S)$.
Therefore, for any $g\in$Mod$(S)$, $g^{-1}Gg\cap C(D_\alpha)$ is finite. This completes the proof.
\end{proof}


\begin{thebibliography}{1}


\bibitem[ACGH16]{ACGH16}
G. Arzhantseva, C. Cashen, D. Gruber, and D. Hume, \textit{Characterizations
of morse quasi-geodesics via superlinear divergence and sublinear contraction,} arXiv preprint arXiv:1601.01897, 2016.


\bibitem[ADT17]{ADT17}
T. Aougab, M. Durham, and S. Taylor, \textit{Pulling back stability with applications to Out($F_n$) and relatively hyperbolic groups,} J. London Math. Soc., doi:10.1112/jlms.12071, 2017.

\bibitem[AMST16]{AMST16}
Y. Antolin, M. Mj, A. Sisto, and S. Taylor, \textit{Intersection properties of stable subgroups and bounded cohomology,} arXiv preprint arXiv:1612.07227, 2016.

\bibitem[BBKL16]{BBKL16}
M. Bestvina, K. Bromberg, R. Kent, and C. Leininger,
\textit{Undistorted purely pseudo-anosov group,} arXiv preprint arXiv:1608.01583, 2016.


\bibitem[BKMM12]{BKMM12}
J. Behrstock, B. Kleiner, Y. Minsky, and L. Mosher, \textit{Geometry and rigidity
of mapping class groups,} Geom. Topol.,\textbf{ 16(2)} (2012), 781–-888.

\bibitem[C15]{C15}
M. Cordes, \textit{Morse boundaries of proper geodesic metric spaces,} arXiv preprint arXiv:1502.04376, 2015.


\bibitem[C17]{C17}
M. Cordes, \textit{A survey on Morse boundaries and stability,} arXiv preprint arXiv:1704.07598, 2017.

\bibitem[CH17]{CH17}
M. Cordes and D. Hume, \textit{Stability and the Morse boundary,} J. London Math. Soc., \textbf{ 5 }(2017), no. 3, 963–-988.



\bibitem[CD17]{CD17}
M. Cordes and M. Durham, \textit{Boundary convex cocompactness and stability of
subgroups of finitely generated groups,} International Mathematics Research Notices, rnx166, https://doi.org/10.1093/imrn/rnx166, 2017.

\bibitem[CS15]{CS15}
R. Charney and H. Sultan,\textit{ Contracting boundaries of CAT(0) spaces,} J. Topol., \textbf{8} (2015),
no. 1, 93–-117.



\bibitem[DS05]{DS05}
C. Drutu and M. Sapir, \textit{Tree-graded spaces and asymptotic cones of
groups,} Topology,\textbf{ 44(5)} (2005), 959–-1058. With an appendix by Denis Osin and
Sapir.


\bibitem[DT15]{DT15}
M. Durham and S. Taylor, \textit{Convex cocompactness and
stability in mapping class groups,} Algebr. Geom. Topol., \textbf{15(5)} (2015), 2839–-2859.




\bibitem[FM02]{FM02}
B. Farb and L. Mosher,\textit{ Convex cocompact subgroups of mapping class groups,} Geom. Topol.\textbf{
6} (2002), 91–-152.

\bibitem[FM10]{FM10}
B. Farb and D. Margalit, \textit{A primer on mapping class groups,} Princeton Univ. Press,
Princeton, N.J., 2010.

\bibitem[G17]{G17}
A. Genevois, \textit{Hyperbolicities in CAT (0) cube complexes,} arXiv preprint arXiv:1709.08843, 2017.

\bibitem[GMRS98]{GMRS98}
R. Gitik, M. Mitra, E. Rips, and M. Sageev, \textit{Widths of subgroups,} Trans. Amer. Math.
Soc. \textbf{350} (1998), no. 1, 321–-329.


\bibitem[GS91]{GS91}
S. Gersten and H. Short, \textit{Rational subgroups of biautomatic groups,} Ann. of
 Math. \textbf{134} (1991), 125--158.


\bibitem[H05]{H05}
U. Hamenst$\ddot{\textup{a}}$dt, \textit{Word hyperbolic extensions of surface groups,} arXiv preprint arXiv:0505244, 2005.

\bibitem[KB02]{KB02}
I. Kapovich and N. Benakli,\textit{ Boundaries of Hyperbolic Groups}, Combinatorial and Geometric group Theory, Contemporary Mathematics, \textbf{296}, AMS, Providence, RI, 2002.

\bibitem[KL08]{KL08}
A. Kent and C. Leininger, \textit{Shadows of mapping class groups: capturing convex cocompactness,} Geom. Funct. Anal. \textbf{18} (2008), no. 4, 1270–-1325.




 \bibitem[MM99]{MM99}
H. Masur and Y. Minsky, \textit{Geometry of the complex of curves. I. Hyperbolicity,}
Invent. Math.,\textbf{ 138(1)} (1999), 103–-149.

\bibitem[R70]
HH. Royden,\textit{ Report on the Teichm¨uller metric,} Proc. Nat. Acad. Sci. U.S.A. \textbf{65}
(1970), 497–-499.

\bibitem[S16]{S16}
A. Sisto, \textit{Quasi-convexity of hyperbolically embedded subgroups,}
Math. Z.,\textbf{ 283(3-4)} (2016), 649–-658.

\bibitem[S91]{S91}
H. Short, \textit{Quasiconvexity and a Theorem of Howson’s, Group Theory from a Geometric
Viewpoint,} Proc. ICTP. Trieste, World Scientific, Singapore, 1991, 168–-176. MR
93d:20071

\bibitem[T17]{T17}
H. Tran, \textit{On strongly quasiconvex subgroups,} arXiv preprint arXiv:1707.05581,
2017.
\end{thebibliography}
\end{document}